\def\BBox{\kern  -0.2cm\hbox{\vrule width 0.2cm height 0.2cm}}
\newtheorem{lemma}{Lemma}[section]
\newtheorem{theorem}{Theorem}[section]
\newtheorem{definition}{Definition}[section]
\newtheorem{corollary}{Corollary}[section]
\newtheorem{remark}{Remark}[section]
\title{About $(k,l)$-kernels, semikernels and Grundy functions  in partial line digraphs}
\author{ C. Balbuena$^{1}$, H. Galeana-S\'anchez$^{2}$,
Mucuy-kak Guevara$^{3}$
\thanks{Research   supported by the Ministerio de Educación y Ciencia,
Spain, the European Regional Development Fund (ERDF) under project
MTM2014-60127-P. CONACyT-México under project 219840 and
PAPIIT-México under project IN108715. \newline
\footnotesize{\em
Email addresses:}
  ~ m.camino.balbuena@upc.edu (C.
Balbuena), \, \, ~ hgaleana@matem.unam.mx (H. Galeana-S\'anchez), \, \, ~ mucuy-kak.guevara@ciencias.unam.mx (M. Guevara)}
 \\[2ex]
 $^1${\footnotesize Departament de Matemática Aplicada III, Universitat
Politècnica de Catalunya, }\\
{\footnotesize Campus Nord, Edifici C2, C/ Jordi Girona 1 i 3 E-08034 Barcelona,
Spain.}\\
 $^2$ {\footnotesize Instituto de Matem\'{a}ticas, Universidad Nacional Aut\'onoma de M\'exico,} \\
{\footnotesize Ciudad Universitaria, Circuito Exterior, 04510 M\'exico D. F., M\'exico }\\
$^3$ {\footnotesize Facultad de Ciencias, Universidad Nacional Aut\'onoma de M\'exico,} \\
{\footnotesize Circuito Exterior S/N, Cd. Universitaria, Delegaci\'on Coyoac\'an, 04510, M\'exico D.F.}}
\date{}
\begin{document}
\maketitle
\begin{abstract}
Let $D=(V,A)$ be a digraph and consider an arc subset $A'\subseteq A$ and an exhaustive mapping $\phi: A\to A'$ such that
\begin{itemize}
\item[(i)] the set of heads of $A'$ is $H(A')=V$;
\item[(ii)] the map fixes the elements of $A'$, that is, $\phi|A'=Id$, and for every vertex $j\in V$, $\phi(\omega^-(j))\subset \omega^-(j)\cap A'$.
\end{itemize}
 Then, \emph{the partial line digraph} of $D$,
denoted by $\mathcal{L}_{(A',\phi)}D
 $ (for short $\mathcal{L}D$ if
the pair $(A', \phi)$ is clear from the context), is the digraph
with vertex set $V (\mathcal{L}D)=A'$  and set of arcs $A(\mathcal{L}D) = \{(ij, \phi(j,k)) : (j,k)\in A\}.$ In this paper we prove the following results:

Let $k,l$ be two natural numbers such that $1\le l \le k$, and $D$ a digraph with minimum in-degree at least 1. Then the number of $(k,l)$-kernels of $D$ is less than or equal to the number of $(k,l)$-kernels of $\mathcal{L} D$. Moreover, if $l<k$ and the girth of $D$ is at least $l+1$, then these two numbers are equal.

The number of semikernels of $D$ is equal to the number of semikernels of $\mathcal{L} D$.

Also we introduce the concept of $(k,l)$-Grundy function as a generalization of the concept of Grundy function and we prove that the number of $(k,l)$-Grundy functions of $D$ is equal to the number of $(k,l)$-Grundy functions of any partial line  digraph  $\mathcal{L} D$.

\end{abstract}

{\bf Keywords:}
Digraphs, in-domination, kernel, Grundy function.

{\bf MSC2010:} 05C20,  05C63


\section{Introduction}

 Throughout the paper, $D = (V ,A)$ denotes a loopless digraph with set of vertices $ V  $ and arc set  $A$.
 Let  $\omega^-(x)$  stand for the set of arcs having  vertex $x$ as their   terminal  vertex, and $\omega^+(x)$  stand for the set of arcs having  vertex $x$ as their  initial  vertex. Thus, the
  \emph{in-degree} of $x$ is $d^-(x) = |\omega^-(x)|$ and the \emph{out-degree} of $x$ is $d^+(x) = |\omega^+(x)|$. The \emph{minimum in-degree} (\emph{minimum out-degree}) of $D$ is $\delta^-(D)=\min \{d^-_D(x) : x\in V\}$ ($\delta^+(D)=\min \{d^+_D(x) : x\in V\}$ respectively).
    Moreover, given a set $U\subseteq V$, $\omega^-(U)=\{(x,y)\in A: y\in U \hbox{ and } x\notin U\}$.     Given a set of arcs $\Omega\subseteq A$,   the \emph{heads} of $\Omega$ are  the vertices in the set  $H(\Omega)=\{y: (x,y)\in \Omega\}$.  For any
pair of vertices $x,y\in V$, a directed path $(x,x_1,\ldots,x_{n-1},y)$ from $x$ to $y$   is called an
$x\rightarrow y$ path.   The \emph{distance} from $x$ to $y$ is denoted by $d_D(x,y)$ and it is defined
to be the length of a shortest $x\rightarrow y$ path.

A set $K\subset V(D)$ is said to be a \emph{kernel} if it is both independent (for every two vertices $x,y\in K$, $d_D(x,y)\ge 2$,) and absorbing (a vertex not in $K$ has a successor in $K$).  This concept was first introduced in \cite{NM:44}  by Von Neumann and Morgensten in the context of Game Theory as a solution for cooperative $n$-player games. The concept of a kernel is important to the theory of digraphs because it arises naturally in applications such as Nim-type games, logic, and facility location, to name a few. Several authors have been investigating sufficient conditions for the existence of kernels in digraphs, for a comprehensive survey see for example \cite{BG:03}  and \cite{F:09}. Also see Chapter 15 of \cite{HHS:98} for a summary.

Let $l,k$ be two integers such that $l\ge 1$ and $k\ge 2$.
  A  \emph{$(k,l)$-kernel of a digraph $D$ } is a subset of
vertices $K$ which is both \emph{$k$-independent} ($d_D(u,v)\ge k$ for all $u,v\in K$) and
\emph{$l$-absorbing} ($d_D(x,K)\le l $ for all $x\in V\setminus K$). Observe that any kernel is a
 $(2,1)$-kernel and  a quasikernel,  introduced in \cite{GPR:91},  is a $(2,2)$-kernel.
The concept of $(k,l)$-kernel is a nice, wide generalization of the concept of kernel; $(k,l)$-kernels  have been deeply studied by several authors, see for example
\cite{GH:14,SWW:07,SWW:08,WW:09}.

Grundy functions are very useful in the context of game theory and they are nearly related to kernels as a digraph with Grundy function has also a kernel.  Also the concept of semikernel is very close to that of kernel, because a digraph such that every induced subdigraph  has a nomempty semikernel has  a kernel.

The line digraph technique is a good general method for obtaining
large digraphs with fixed degree and diameter. In the line digraph
$L(D)$ of a digraph $D$, each vertex  represents an arc of $D$.
Thus, $V(L(D))=\{ uv : (u,v) \in A(D)\} $; and  a vertex $uv$ is
adjacent to a vertex $xz$ if and only if $v=x$, that is, when  the
arc $(u,v)$ is adjacent to the arc $(x,z)$ in $D$. For any $h >
1$,  the \emph{$h$-iterated line digraph}, $L^h(D)$, is defined
recursively by  $L^h(D)=L(L^{h-1}(D))$.  For more information about line digraphs
 see,  for instance, Aigner \cite{A:67}, Fiol, Yebra
and Alegre \cite{FYA:84} and Reddy, Kuhl, Hosseini and Lee
\cite{RKHL:82}.

A wider family of digraphs, called partial line digraphs, was introduced in \cite{FLl92} as a generalization of line digraphs.
Let $D = (V ,A)$ be a digraph  and consider an arc subset
$A'\subseteq A$ and   an exhaustive mapping $\phi : A \to A'$ such
that:
 \begin{itemize}
  \item[(i)] the set of heads of $A'$ is $H(A')=V$;
  \item[(ii)] the map $\phi$ fixes the elements of $A'$, that is, $\phi|A' = id$, and  for every
  vertex $j\in V$,  $\phi(\omega^-(j))\subset
\omega^-(j)\cap A'$.
\end{itemize}
Hence, $|V| \le |A'|\le |A|$. Note that the existence of such a
subset $A'$ is guaranteed since $\delta^-(i)\ge 1$ for every $i\in V$.
 Then, \emph{the partial line digraph} of $D$,
denoted by $\mathcal{L}_{(A',\phi)}D
 $ (for short $\mathcal{L}D$ if
the pair $(A', \phi)$ is clear from the context), is the digraph
with vertex set $V (\mathcal{L}D)=A'$  and set of arcs
$$A(\mathcal{L}D) = \{(ij, \phi(j,k)) : (j,k)\in A\}.$$

\begin{remark}\label{uno}
If $A' = A$, then $\phi = id$ and
the partial line digraph $\mathcal{L}D$ coincides with the line
digraph $L(D)$.
\end{remark}

Fig. \ref{FigLP1} shows an example of a digraph $D$ with  12 arcs and its partial line digraph with
$|A'|= 9$ vertices. The arcs not in $A'$ are drawn with dotted lines and have images $\phi(12) = 42$,
$\phi(34) = 54$, and $\phi(65) = 25$.

\begin{figure}[h!]
\begin{center}
\psset{unit=0.9cm, linewidth=0.035cm}
   \begin{pspicture}(-1,-1)(12,5.5)
      \cnode*(1,5){0.1}{1} \put(1.3,5){\footnotesize \it $1$}
      \cnode*(4,-.5){0.1}{3} \put(4.3,-0.6){\footnotesize \it $3$}
      \cnode*(-2,-.5){0.1}{6} \put(-2.5,-0.6){\footnotesize \it $6$}
      \cnode*(1,2.5){0.1}{2} \put(1.3,2.6){\footnotesize \it $2$}
      \cnode*(2,.75){0.1}{4} \put(2.2,0.8){\footnotesize \it $4$}
      \cnode*(0,0.75){0.1}{5} \put(-.4,0.8){\footnotesize \it $5$}

      \put(4,3){\footnotesize   $\phi(12)=42$}
      \put(4,2){\footnotesize   $\phi(34)=54$}
      \put(4,1){\footnotesize   $\phi(65)=25$}

      \ncline[arrowsize=.2 3]{->}{1}{3}
      \ncline[arrowsize=.2 3]{->}{3}{6}
      \ncline[arrowsize=.2 3]{->}{6}{1}
      \ncline[arrowsize=.2 3]{->}{2}{5}
      \ncline[arrowsize=.2 3]{->}{5}{4}
      \ncline[arrowsize=.2 3]{->}{4}{2}

     \ncarc[arcangle=-12, arrowsize=.2 2]{->}{2}{1}
      \ncarc[arcangle=-12, arrowsize=.2 2, linestyle=dashed, linecolor=blue]{->}{1}{2}
      \ncarc[arcangle=-12, arrowsize=.2 2, linestyle=dashed, linecolor=blue]{->}{3}{4}
      \ncarc[arcangle=-12, arrowsize=.2 2]{->}{4}{3}
      \ncarc[arcangle=-12, arrowsize=.2 2]{->}{5}{6}
      \ncarc[arcangle=-12, arrowsize=.2 2, linestyle=dashed, linecolor=blue]{->}{6}{5}


      \cnode*(11,4.3){0.1}{21} \put(10.8,4.5){\footnotesize \it $21$}
       \cnode*(7,4.3){0.1}{56} \put(6.6,3.9){\footnotesize \it $56$}
      \cnode*(9,.5){0.1}{43} \put(9.2,.5){\footnotesize \it $43$}

      \cnode*(8,3.7){0.1}{25}\put(7.5,3.4){\footnotesize \it $25$}
      \cnode*(9,1.7){0.1}{54} \put(9.25,1.5){\footnotesize \it $54$}
      \cnode*(10,3.7){0.1}{42}\put(9.7,4){\footnotesize \it $42$}

      \cnode*(9,-0.5){0.1}{36} \put(8.8,-1){\footnotesize \it $36$}
      \cnode*(12,5){0.1}{13} \put(12.2,5){\footnotesize \it $13$}
      \cnode*(6,5){0.1}{61} \put(5.5,5){\footnotesize \it $61$}

      \ncline[arrowsize=.2 2]{->}{25}{54}
      \ncline[arrowsize=.2 2]{->}{54}{42}
      \ncline[arrowsize=.2 2]{->}{42}{25}
      \ncline[arrowsize=.2 2]{->}{13}{36}
      \ncline[arrowsize=.2 2]{->}{36}{61}
      \ncline[arrowsize=.2 2]{->}{61}{13}

      \ncline[arrowsize=.2 2]{->}{21}{13}
      \ncline[arrowsize=.2 2]{->}{56}{61}
      \ncline[arrowsize=.2 2]{->}{43}{36}

      \ncarc[arcangle=29, arrowsize=.2 2]{->}{25}{56}
      \ncarc[arcangle=-29, arrowsize=.2 2]{->}{42}{21}
      \ncarc[arcangle=29, arrowsize=.2 2]{->}{54}{43}
      \ncarc[arcangle=-29, arrowsize=.2 2, linecolor=blue]{->}{21}{42}
      \ncarc[arcangle=29, arrowsize=.2 2, linecolor=blue]{->}{56}{25}
      \ncarc[arcangle=29, arrowsize=.2 2, linecolor=blue]{->}{43}{54}

      \ncline[arrowsize=.2 2, linecolor=blue]{->}{61}{42}
      \ncline[arrowsize=.2 2, linecolor=blue]{->}{36}{25}
      \ncline[arrowsize=.2 2, linecolor=blue]{->}{13}{54}

      \end{pspicture}
\caption{A digraph and its partial line digraph. \label{FigLP1}}
\end{center}
\end{figure}
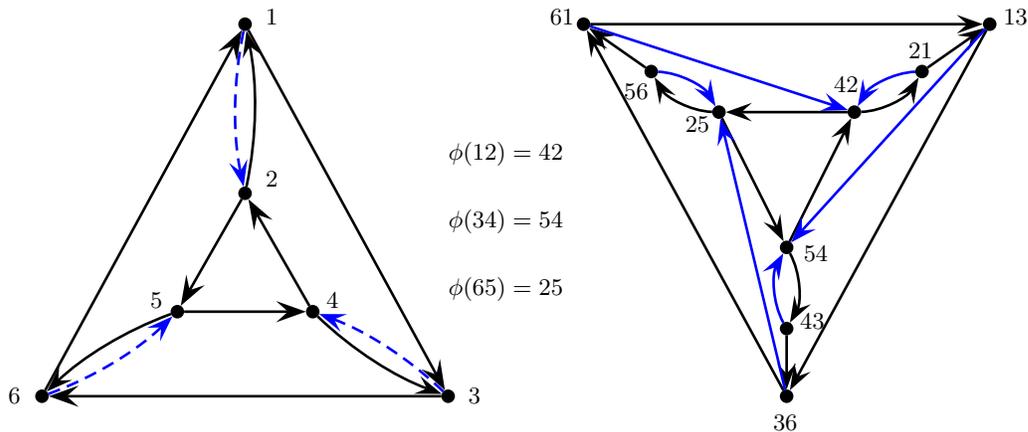

In this paper we study the relationship between the number of $(k,l)$-kernels (resp. semikernels) of a digraph $D$ and the corresponding number in any partial line digraph $\mathcal{L} D$. Also we introduce the concept of $(k,l)$-Grundy function as a generalization of the concept of Grundy function and we prove that the number of $(k,l)$-Grundy functions of $D$ is equal to the number of $(k,l)$-Grundy functions of any partial line  digraph  $\mathcal{L} D$.

\section{$(k,l)$-kernels and semikernels}

 In this section we will prove that the number of $(k,l)$-kernels of a digraph is less than or equal to the number of $(k,l)$-kernels of its partial line digraphs,  and under certain conditions these two numbers are equal.

We start by proving a result concerning independent sets of a digraph and of those of their partial line digraphs.

\begin{lemma}\label{kinde}
Let $D$ be a digraph with minimum in-degree at least 1. Let $A'$ and $\phi$ satisfy the requirements of the definition of a partial line digraph, i.e., $\mathcal{L}_{(A',\phi)}D=\mathcal{L} D$. Let $k\ge2$ be an integer number.  Denote by $\mathcal{I}$ the set of all $k$-independent sets of $D$, and by $\mathcal{I}^*$ the set of all $k$-independent set of  $\mathcal{L}D$. Then
the assignment $f:\mathcal{I} \to \mathcal{I}^*$  defined by $f(I)=\omega^-(I)\cap A'$ for all $I\in \mathcal{I}$ is an injective function.
Therefore the   number of $k$-independent sets of $D$ is less than or equal to  number  of  $k$-independent sets of $\mathcal{L} D$.
\end{lemma}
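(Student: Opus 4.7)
The plan is to verify two properties of the assignment $f$: first, that $f(I)=\omega^-(I)\cap A'$ really is $k$-independent in $\mathcal{L}D$ whenever $I\in\mathcal{I}$, so that $f$ is well-defined as a map $\mathcal{I}\to\mathcal{I}^*$; and second, that $I$ can be recovered from $f(I)$, which then yields injectivity.

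For well-definedness I would exploit the natural projection of walks from $\mathcal{L}D$ to $D$. By the definition of the arc set of $\mathcal{L}D$, every walk $i_0j_0\to i_1j_1\to\cdots\to i_mj_m$ in $\mathcal{L}D$ forces $(j_{t-1},j_t)\in A$ for each $t$, so the second-coordinate sequence $j_0 j_1 \cdots j_m$ is a walk of the same length in $D$. Taking two vertices $u=ij$ and $v=i'j'$ of $f(I)$, one has $j,j'\in I$, and any walk from $u$ to $v$ of length $m\ge 1$ in $\mathcal{L}D$ projects to a walk from $j$ to $j'$ of length $m$ in $D$. The $k$-independence of $I$ then forces $m\ge k$: if $j\ne j'$ this is immediate from $d_D(j,j')\ge k$, while if $j=j'$ it follows because no element of $I$ can lie on a closed walk of length less than $k$. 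Hence $d_{\mathcal{L}D}(u,v)\ge k$ and $f(I)\in\mathcal{I}^*$.

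For injectivity I would show that $I$ coincides with the set $H(f(I))$ of heads of arcs in $f(I)$. One inclusion is immediate from the definition of $\omega^-$: every arc in $f(I)$ has its head in $I$. For the reverse inclusion, fix $j\in I$; the hypothesis $H(A')=V$ supplies at least one arc $ij\in A'$, and the tail $i$ cannot lie in $I$, since otherwise $(i,j)\in A$ would exhibit a walk of length $1<k$ between two elements of $I$ (note $i\ne j$ because $D$ is loopless and $k\ge 2$), contradicting $k$-independence. Therefore $ij\in\omega^-(I)\cap A'=f(I)$, so $j\in H(f(I))$. Since $I$ is thus determined by $f(I)$, the equality $f(I)=f(I')$ gives $I=H(f(I))=H(f(I'))=I'$.

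The main technical delicacy sits in the first part, in the sub-case where two distinct elements of $f(I)$ share the same second coordinate $j\in I$: the walk-projection produces only a short closed walk at $j$ in $D$, so one really needs the closed-walk content of the $k$-independence hypothesis to derive the required contradiction. Once that sub-case is handled, the remaining steps are direct bookkeeping with the definition of $\mathcal{L}D$ and the conditions imposed on the pair $(A',\phi)$.
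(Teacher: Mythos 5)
Your overall route coincides with the paper's: well-definedness is obtained by projecting a shortest path of $\mathcal{L}D$ onto its second coordinates to get a walk of the same length in $D$, and injectivity by recovering $I$ as the head set $H(f(I))$ using the hypothesis $H(A')=V$. The injectivity half is complete, and in fact slightly more careful than the paper's version, since you justify that the tail $i$ of an arc $ij\in A'$ with $j\in I$ must lie outside $I$ (which is needed because $\omega^-(I)$ by definition excludes arcs with both endpoints in $I$).

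The gap sits exactly in the sub-case you single out, namely two elements $ij,i'j'$ of $f(I)$ with $j=j'$. You dispatch it by asserting that no element of $I$ can lie on a closed walk of length less than $k$, calling this the ``closed-walk content'' of $k$-independence. But $k$-independence, as defined in the paper, is a condition on distances between \emph{distinct} elements of $I$; it carries no information about closed walks through a single element. For example, a singleton $I=\{b\}$ is $k$-independent for every $k$, yet $b$ may lie on a digon $b\to c\to b$; if $ab$ and $cb$ are both in $A'$, then both belong to $f(I)$ while $d_{\mathcal{L}D}(ab,cb)$ can be as small as $2$ (via $ab\to\phi(bc)\to\phi(cb)$). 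So the property you invoke does not follow from the hypotheses, and the step fails for $k\ge 3$. You have, in fairness, put your finger on a genuine soft spot: the paper's own proof writes $d_D(b,d)\ge k$ ``because $b,d\in I$'' and thereby silently assumes $b\ne d$, so it passes over this sub-case rather than resolving it. But an appeal to a closed-walk property that $k$-independence does not supply is not a resolution; the case is unproblematic only for $k=2$ (where an arc of $\mathcal{L}D$ between two elements of $f(I)$ would force a loop of $D$, which is excluded), and for $k\ge 3$ one would need an additional hypothesis controlling short closed walks through vertices of $I$.
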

\begin{proof}
First of all let us see that $f$ is a function.
Let
$ab,cd\in \omega^-(I)\cap A'$ be such that
$d_{\mathcal{L} D}(ab,cd)=t$, and observe that
  $d_D(b,d)\ge k$   because     $b,d\in I$. By definition
   of $\phi$    any shortest path   from $ab$ to $cd$ in $\mathcal{L} D$ is
  $ab,\phi(bb_1), \phi(b_1b_2), \ldots, \phi(b_{t-1}b_t)=cd$,
   where $b_i\in V( D)$ and $(b,b_1), (b_i,b_{i+1})\in A(D)$, $i=1,\ldots, t-1$.
    Since $\phi(b_{t-1}b_t)=\alpha b_t$ for some $\alpha \in V(D)$,
    then $b_t=d$ yielding that a walk
$b,b_1,\ldots, b_t=d$ from $b$ to $d$ of length $t$ exists in $D$.
This means that $t\ge d_D(b,d)\ge  k$ and hence every two vertices
of $  \omega^-(I)\cap A'$ are mutually at distance at least $k$.

Let us prove that $f$ is an injective function. Let $I_1,I_2\in \mathcal{I}$ be such that  $f(I_1)=f(I_2)$, that is  $\omega^-(I_1)\cap A'=\omega^-(I_2)\cap A'$. Let us show that $I_1=I_2$. Let $u\in I_1$. Note that by item (i) of definition of $\mathcal{L} D$ there is $y\in V(D)$ such that
$yu\in A'$. Clearly, $yu\in \omega^-(I_1)\cap A'$ which implies that $yu\in \omega^-(I_2)\cap A'$, then $u\in I_2$, that is, $I_1\subseteq I_2$. Reasoning analogously, $I_2\subseteq I_1$ yielding that $I_1=I_2$. Therefore $f$ is an injective function and the lemma holds.
\end{proof}

 The concept of \emph{Fibonacci number}  for a graph $G$ was introduced in \cite{PT:82} and it is defined as the number of independent subsets of $G$ including the empty set. We extend this concept for digraphs, and  we give an upper bound on the Fibonacci number  of a digraph in terms of the Fibonacci number of its partial line digraph.
\begin{corollary}\label{fibo}
Let $D$ be a digraph with minimum in-degree at least 1. Let $A'$ and $\phi$ satisfy the requirements of the definition of a partial line digraph, i.e., $\mathcal{L}_{(A',\phi)}D=\mathcal{L} D$. Then the  Fibonacci number of  $D$ is less than or equal to the Fibonacci number  of   $\mathcal{L} D$.
\end{corollary}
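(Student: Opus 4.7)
The plan is to observe that this corollary is essentially an immediate specialization of Lemma \ref{kinde} to the case $k=2$, together with a careful identification of what ``independent subset'' means for a digraph. First I would recall that, under the distance-based definition of $k$-independence used in this paper, a $2$-independent set of $D$ is precisely a vertex subset $I$ such that $d_D(u,v)\ge 2$ for all $u,v\in I$, that is, a set containing no arc in either direction between any two of its vertices. Thus the natural extension to digraphs of the Fibonacci number of \cite{PT:82} is exactly the cardinality of the set $\mathcal{I}$ of all $2$-independent subsets of $V(D)$, including the empty set.

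Next I would apply Lemma \ref{kinde} with $k=2$. That lemma provides an injective map $f:\mathcal{I}\to \mathcal{I}^*$ defined by $f(I)=\omega^-(I)\cap A'$, where $\mathcal{I}^*$ is the family of all $2$-independent sets of $\mathcal{L}D$. One only needs to check that the empty set is handled correctly on both sides: indeed $\emptyset\in \mathcal{I}$ and $f(\emptyset)=\omega^-(\emptyset)\cap A'=\emptyset\in \mathcal{I}^*$, so the empty set is mapped to the empty set and both counts include it. From the injectivity of $f$ we conclude that $|\mathcal{I}|\le|\mathcal{I}^*|$, which is exactly the asserted inequality between the Fibonacci number of $D$ and that of $\mathcal{L}D$.

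There is no real obstacle here; the only mildly delicate point is the convention about the definition of ``independent set'' in a digraph (distance-based versus arc-based), and confirming that these coincide for $k=2$ so that Lemma \ref{kinde} truly counts what the extended Fibonacci number is meant to count. Since this is an immediate consequence of the preceding lemma, I would keep the proof to essentially one or two lines, simply invoking Lemma \ref{kinde} with $k=2$.
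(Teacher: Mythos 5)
Your proposal is correct and matches the paper's intent exactly: the corollary is stated without proof precisely because it is the $k=2$ specialization of Lemma~\ref{kinde}, and your verification that $f(\emptyset)=\emptyset$ (so the empty set is counted consistently on both sides) is a sound, if minor, addition.
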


\subsection{$(k,l)$-kernels}

Some known results about the existence of kernels and $(k,l)$-kernels
in line digraphs can be seen in  \cite{H:82,QEM:06}.
 The following theorem is proved in \cite{BG:10}.

\begin{theorem}\cite{BG:10}\label{BG:10}
Let $k,l$ be two natural numbers such that $1\leq l < k$, and let $D$ be a digraph with minimum in-degree at least 1 and girth at least $l+1$. Then $D$ has a $(k,l)$-kernel if and only if any partial line digraph $\mathcal{L} D$ has a $(k,l)$-kernel.
\end{theorem}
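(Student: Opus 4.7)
I would prove both implications by transporting the $(k,l)$-kernel across the partial line digraph construction via the natural assignments $K\mapsto \omega^-(K)\cap A'$ and $K^*\mapsto H(K^*)$, with Lemma~\ref{kinde} taking care of the independence bookkeeping. For the forward direction, suppose $K$ is a $(k,l)$-kernel of $D$ and set $K^*=\omega^-(K)\cap A'$; Lemma~\ref{kinde} already says $K^*$ is $k$-independent in $\mathcal{L}D$, so only $l$-absorption needs checking. Given $uv\in A'\setminus K^*$ one has $v\notin K$, hence a $D$-path $v=v_0,v_1,\ldots,v_s$ of length $s\le l$ lands in $K$, and lifting it produces the $\mathcal{L}D$-path $uv,\phi(v,v_1),\phi(v_1,v_2),\ldots,\phi(v_{s-1},v_s)$ whose terminal vertex sits in $\omega^-(v_s)\cap A'\subseteq K^*$. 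This direction does not use the girth hypothesis.

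For the reverse direction, given a $(k,l)$-kernel $K^*$ of $\mathcal{L}D$ I would set $K=H(K^*)$ and prove the key fiber identity: for every $v\in K$, every arc in $\omega^-(v)\cap A'$ already belongs to $K^*$ (equivalently, the map $f$ of Lemma~\ref{kinde} restricts to a bijection on $(k,l)$-kernels). To prove it, suppose for a contradiction that $bv\in K^*$ while $cv\in A'\setminus K^*$. The $l$-absorbing property of $K^*$ supplies a $\mathcal{L}D$-path $cv=y_0,y_1,\ldots,y_s$ with $y_s\in K^*$ and $s\le l$; writing $y_i=\phi(v_{i-1},v_i)$ reveals a $D$-walk $v=v_0,v_1,\ldots,v_s$ of length $s$. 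Starting now from $bv$ and following the same walk, $bv,\phi(v,v_1),\ldots,\phi(v_{s-1},v_s)=y_s$ is another $\mathcal{L}D$-path of length $s$ ending at the same $y_s$. If $bv=y_s$ then $v=v_s$, so the walk closes into a cycle of length at most $l$, contradicting the hypothesis that the girth of $D$ is at least $l+1$; otherwise $bv$ and $y_s$ are distinct elements of $K^*$ at $\mathcal{L}D$-distance $\le l<k$, contradicting the $k$-independence of $K^*$.

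Once this fiber identity is in hand, the remaining checks are routine. For $l$-absorption of $K$ in $D$: if $v\notin K$, condition (i) of the partial line digraph definition provides some $cv\in A'$, which is not in $K^*$, and projecting its absorbing $\mathcal{L}D$-walk to $D$ gives a walk of length at most $l$ from $v$ to a vertex of $K$. For $k$-independence: if $u,v\in K$ are distinct with $d_D(u,v)=t$, fix any $au\in A'$ and lift a shortest $u\to v$ path to the $\mathcal{L}D$-path $au,\phi(u,u_1),\ldots,\phi(u_{t-1},v)$; the fiber identity forces both endpoints into $K^*$, they are distinct because their heads $u,v$ differ, and the $k$-independence of $K^*$ yields $t\ge k$.

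The expected main obstacle is the fiber identity itself, because it is the one place where the girth hypothesis is genuinely invoked: the condition that the girth of $D$ is at least $l+1$ is exactly what rules out the absorbing walk from $cv$ closing back at $v$. Without it, one could have $K^*\subsetneq \omega^-(H(K^*))\cap A'$ and the correspondence $K^*\mapsto H(K^*)$ might fail to deliver a $(k,l)$-kernel of $D$.
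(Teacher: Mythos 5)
Your argument is correct, and it follows exactly the route this paper attributes to the original proof: the paper does not reprove Theorem~\ref{BG:10} (it is quoted from \cite{BG:10}), but in the proof of Theorem~\ref{klker} it explicitly relies on the same two correspondences $K\mapsto \omega^-(K)\cap A'$ and $K^*\mapsto H(K^*)$, with the girth hypothesis entering precisely where you invoke it. Your ``fiber identity'' and the use of Lemma~\ref{kinde} for $k$-independence match the intended argument, so nothing further is needed.
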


Note that, since a kernel is a $(2,1)$-kernel, it follows that  $D$ has a kernel if and only if any partial line digraph $\mathcal{L} D$ has a kernel.
Next,  we establish a relationship between the number of $(k,l)$-kernels of $D$ and the number of $(k,l)$-kernels of $\mathcal{L} D$.

\begin{theorem}\label{klker}
Let $k,l$ be two natural numbers such that $ l \ge 1$ and $ k\ge 2$, and let $D$ be a digraph with minimum in-degree at least 1. Let $A'$ and $\phi$ satisfy the requirements of the definition of a partial line digraph, i.e., $\mathcal{L}_{(A',\phi)}D=\mathcal{L} D$. Then
 the number of $(k,l)$-kernels of  $D$ is less than or equal to the number of $(k,l)$-kernels of    $\mathcal{L} D$.
Moreover, if $ l < k$ and the
  girth of $D$ is at least $l+1$, then these numbers are equal.
\end{theorem}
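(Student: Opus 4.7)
The plan is to reuse the injection $f(K)=\omega^-(K)\cap A'$ from Lemma \ref{kinde} and show that it restricts to an injection from the set of $(k,l)$-kernels of $D$ into the set of $(k,l)$-kernels of $\mathcal{L}D$. By Lemma \ref{kinde} the image $f(K)$ is automatically $k$-independent in $\mathcal{L}D$ and $f$ is injective, so for the first inequality it suffices to check that if $K$ is $l$-absorbing in $D$ then $f(K)$ is $l$-absorbing in $\mathcal{L}D$. Given $xy\in A'\setminus f(K)$, the definition of $f$ forces $y\notin K$, and the $l$-absorbing property of $K$ yields a walk $y=y_0\to y_1\to\cdots\to y_t$ in $D$ with $y_t\in K$ and $t\le l$. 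Applying $\phi$ arc by arc produces the walk $xy,\phi(y_0y_1),\phi(y_1y_2),\ldots,\phi(y_{t-1}y_t)$ in $\mathcal{L}D$ whose terminal vertex lies in $\omega^-(y_t)\cap A'\subseteq f(K)$, proving $d_{\mathcal{L}D}(xy,f(K))\le l$.

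For the equality clause, assuming $l<k$ and girth at least $l+1$, I will build a two-sided inverse $g(K^*)=H(K^*)$ to $f$. The whole argument hinges on the following key claim: if $K^*$ is a $(k,l)$-kernel of $\mathcal{L}D$ and $v\in H(K^*)$, then every arc of $\omega^-(v)\cap A'$ belongs to $K^*$. To prove it, suppose $xv\in K^*$ and some $yv\in A'\setminus K^*$; the $l$-absorbing property of $K^*$ gives a walk of length $t\le l$ from $yv$ to some $ab\in K^*$, and this walk projects in $D$ to a walk $v\to v_1\to\cdots\to v_t=b$. Lifting the same $D$-walk via $\phi$ starting at $xv$ yields $xv\to\phi(v,v_1)\to\cdots\to\phi(v_{t-1},v_t)=ab$ in $\mathcal{L}D$, so $d_{\mathcal{L}D}(xv,ab)\le t\le l<k$. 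The $k$-independence of $K^*$ then forces $xv=ab$, but that makes the projected walk a closed walk of length $t\ge 1$ based at $v$, contradicting girth $\ge l+1>t$.

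With the claim in hand, $f\circ g(K^*)=\omega^-(H(K^*))\cap A'=K^*$, and $g\circ f(K)=H(\omega^-(K)\cap A')=K$, where the second identity uses condition (i) of the partial line digraph definition to guarantee that every $v\in K$ admits some in-arc in $A'$. It still remains to verify that $K=g(K^*)$ is itself a $(k,l)$-kernel of $D$: $l$-absorption is immediate by picking any in-arc $xy\in A'$ for $y\notin K$, noting $xy\notin K^*$ since $y\notin H(K^*)$, and projecting back to $D$ the walk given by $l$-absorption in $\mathcal{L}D$; and $k$-independence follows by the same lifting argument as in the proof of the claim, now applied to two hypothetical elements $u,v\in K$ at $D$-distance $s<k$, using the claim to upgrade the terminal vertex $\phi(u_{s-1},v)$ of the lifted walk to an element of $K^*$ and thereby derive a contradiction to the $k$-independence of $K^*$. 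The main obstacle is the key claim itself: without the combined strength of $l<k$ and girth at least $l+1$ the mapping $K^*\mapsto H(K^*)$ can fail to recover $K^*$, and this is precisely the rigidity that makes the two counts agree in the regime of the theorem.
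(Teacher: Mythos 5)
Your proposal is correct and follows essentially the same route as the paper: the same pair of maps $K\mapsto\omega^-(K)\cap A'$ and $\hat K\mapsto H(\hat K)$, shown to be mutually inverse bijections between the two sets of $(k,l)$-kernels under the hypotheses $l<k$ and girth at least $l+1$. The only real difference is that the paper outsources the verification that these maps are well defined to the proof of Theorem 2.1 of \cite{BG:10}, whereas you supply those verifications directly (via your ``key claim'') and also check $f\circ g=\mathrm{id}$ explicitly, which the paper leaves implicit.
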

\begin{proof}
Denote by $\mathcal{K}$ the set of all $(k,l)$-kernels of $D$, and by $\mathcal{K}^*$ denote the set of all $(k,l)$-kernels of  $\mathcal{L}D$.

Let $f:\mathcal{K} \to \mathcal{K}^*$ be defined by $f(K)=\omega^-(K)\cap A'$ for all $K\in \mathcal{K}$. In the proof of Theorem 2.1 of \cite{BG:10} it was proved that $f$ is well defined. And from Lemma \ref{kinde} it follows that  $f$ is injective. Therefore $|\mathcal{K}|\le | \mathcal{K}^*|$.

Let $h:\mathcal{K}^* \to \mathcal{K}$ be defined by $h(\hat{K})=H(\hat{K})$ for all $\hat{K}\in \mathcal{K}^*$. In the proof of Theorem 2.1 of \cite{BG:10} it was proved that $h$ is well defined if  $ l < k$ and the girth of $D$ is at least $l+1$. Moreover, we can check that $h=f^{-1}$ because $h(f(K))=H(\omega^-(K)\cap A')=K$. Therefore the theorem holds.
\end{proof}

\begin{corollary}
Let $D$ be a digraph with minimum in-degree at least 1. Let $A'$ and $\phi$ satisfy the requirements of the definition of a partial line digraph, i.e., $\mathcal{L}_{(A',\phi)}D=\mathcal{L} D$. Then the following assertions hold:
\begin{itemize}
\item[(i)] The   number of kernels of $D$ is equal to the  number of kernels of   $\mathcal{L} D$.
\item[(ii)] The   number of quasikernels of $D$ is less than or equal to the  number of quasikernels of   $\mathcal{L} D$.
\end{itemize}

\end{corollary}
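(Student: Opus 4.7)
The plan is to derive both assertions as direct specializations of Theorem \ref{klker}, since kernels are exactly $(2,1)$-kernels and quasikernels are exactly $(2,2)$-kernels. There is no new construction to perform; the only work is checking that the hypotheses of Theorem \ref{klker} are met for the specific choices of $k$ and $l$.

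For part (i), I would set $k=2$ and $l=1$. Then the basic hypotheses $l\ge 1$ and $k\ge 2$ are immediate, so Theorem \ref{klker} already yields that the number of kernels of $D$ is at most the number of kernels of $\mathcal{L}D$. To upgrade this to equality via the second half of Theorem \ref{klker}, I must verify that $l<k$ and that the girth of $D$ is at least $l+1=2$. The first is obvious, and the second is built into the paper's standing convention that $D$ is loopless, which excludes $1$-cycles and therefore forces the girth to be at least $2$. Hence equality follows.

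For part (ii), I would set $k=2$ and $l=2$. The hypotheses $l\ge 1$ and $k\ge 2$ are again satisfied, so the first conclusion of Theorem \ref{klker} applies and gives the inequality on the number of quasikernels. Equality is not claimed here, and indeed one cannot invoke the second part of Theorem \ref{klker}, because that statement requires the strict inequality $l<k$, which fails for $l=k=2$. This exactly explains the asymmetry between (i) and (ii).

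The expected obstacle is essentially cosmetic: making sure that the loopless assumption in the paper's global convention is invoked explicitly to obtain girth$(D)\ge 2$ in (i), and making clear in (ii) why equality cannot be concluded from the theorem as stated. No further argument, injection, or auxiliary lemma is required beyond quoting Theorem \ref{klker} with the two specific parameter pairs.
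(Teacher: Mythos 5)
Your proposal is correct and is exactly the argument the paper intends: the corollary is stated without proof as an immediate specialization of Theorem \ref{klker}, with $(k,l)=(2,1)$ for kernels (where the looplessness convention gives girth at least $2$, so the equality clause applies) and $(k,l)=(2,2)$ for quasikernels (where $l<k$ fails, so only the inequality is available). Your explicit verification of the girth hypothesis and your explanation of the asymmetry between (i) and (ii) match the paper's discussion, including its Figure \ref{contra} example showing the quasikernel inequality can be strict.
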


\begin{figure}[h!]
\begin{center}
\psset{unit=0.9cm, linewidth=0.035cm}
   \begin{pspicture}(-1,-1)(12,3.)

      \cnode*(1,1.75){0.1}{2} \put(1.1,1.85){\footnotesize \it $x$}
      \cnode*(2,0){0.1}{4} \put(2.2,0.1){\footnotesize \it $z$}
      \cnode*(0,0.){0.1}{5} \put(-.4,0.1){\footnotesize \it $y$}
      \cnode*(3,1.75){0.1}{3} \put(3.1,1.85){\footnotesize \it $t$}

      \ncline[arrowsize=.2 3]{->}{2}{3}
      \ncline[arrowsize=.2 3]{->}{3}{4}
      \ncline[arrowsize=.2 3]{->}{2}{5}
      \ncline[arrowsize=.2 3]{->}{5}{4}
      \ncline[arrowsize=.2 3]{->}{4}{2}

 \cnode*(7,1.75){0.1}{l2} \put(7.1,1.85){\footnotesize \it $xy$}
      \cnode*(8,0.){0.1}{l4} \put(8.,-0.4){\footnotesize \it $zx$}
      \cnode*(6,0.){0.1}{l5} \put(5.6,-0.4){\footnotesize \it $yz$}
       \cnode*(10,0.){0.1}{l1} \put(10.,-0.4){\footnotesize \it $tz$}
  \cnode*(9,1.75){0.1}{l3} \put(9.1,1.85){\footnotesize \it $xt$}

      \ncline[arrowsize=.2 3]{->}{l2}{l5}
      \ncline[arrowsize=.2 3]{->}{l5}{l4}
      \ncline[arrowsize=.2 3]{->}{l4}{l2}
       \ncline[arrowsize=.2 3]{->}{l1}{l4}
   \ncline[arrowsize=.2 3]{->}{l3}{l1}
       \ncline[arrowsize=.2 3]{->}{l4}{l3}
      \end{pspicture}
\caption{A digraph with 3 quasikernels and its  line digraph with 5. \label{contra}}
\end{center}
\end{figure}
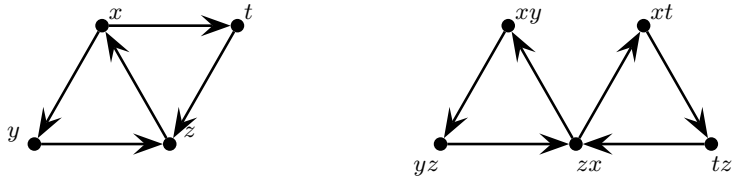
Let us observe that  the number of quasikernels of $D$ can be strictly less than  the  number of quasikernels of  its line digraph $L(D)$. A quasikernel is a $(2,2)$-kernel as we mentioned before, i.e., $k=l=2$. The digraph shown in Figure \ref{contra} proves that  the hypothesis $k<l$ can no be avoided to guarantee that the number of quasikernels of $D$ and $L(D)$ is equal. In this example the digraph $D$ on the left side hast 3 quasikernels, namely, $\{x\}$, $\{z\}$ and $\{y,t\}$, while its line digraph on the right side has 5 quasikernels which are $\{zx\}$, $\{tz,yz\}$, $\{xy,xt\}$, $\{xt,yz\}$ and $\{xy,tz\}$.

\subsection{Semikernels}
Let $S$ be an independent set of $D$. We say that $S$ is a \emph{semikernel} of $D$ if for all  $sx\in \omega^+(S)$  there exists $xs'\in \omega^-(S)$.
Thus, a vertex of out-degree zero forms a semikernel. Also a vertex only incident with symmetric arcs forms a semikernel. Figure \ref{pentas} depicts a digraph having a semikernel but not a kernel.
 In \cite{N:71} it was proved that if every induced subdigraph of $D$ has a (nonempty) semikernel, then every induced subdigraph of $D$ has a kernel, and so $D$.

\begin{figure}[h!]
\begin{center}
\psset{unit=0.9cm, linewidth=0.035cm}
   \begin{pspicture}(-4,-0.5)(9,2.6)

      \cnode*(1.5,-0.5){0.1}{1}
      \cnode*(0,0.7){0.1}{2}
      \cnode*(1,0.7){0.1}{3}
      \cnode*(2,0.7){0.1}{4}
      \cnode*(3,0.7){0.1}{5}
      \cnode*(0,2.){0.1}{6}
      \cnode*(1,2.){0.1}{7}
       \cnode*(2,2.){0.1}{8}
  \cnode*(3,2){0.1}{9} \put(3.,2.2){\footnotesize \it $x$}

      \ncline[arrowsize=.2 3]{->}{2}{1}
      \ncline[arrowsize=.2 3]{->}{1}{3}
      \ncline[arrowsize=.2 3]{->}{3}{7}
      \ncline[arrowsize=.2 3]{->}{7}{6}
      \ncline[arrowsize=.2 3]{->}{6}{2}

 \ncline[arrowsize=.2 3]{->}{4}{1}
      \ncline[arrowsize=.2 3]{->}{1}{5}
      \ncline[arrowsize=.2 3]{->}{5}{9}
      \ncline[arrowsize=.2 3]{->}{9}{8}
      \ncline[arrowsize=.2 3]{->}{8}{4}
\ncarc[arcangle=30, arrowsize=.2 2]{->}{8}{9}

      \end{pspicture}
\caption{A digraph with semikernel $\{x\}$ but without kernels. \label{pentas}}
\end{center}
\end{figure}
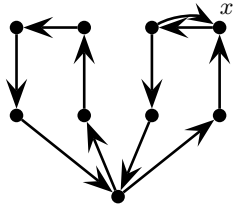
In Theorem 2.1 of \cite{GPR:91} it was proved that the number of semikernels of a digraph   with minimum in-degree at least one  is less than or equal to the number of semikernels of  the line digraph. Next we improve and generalize this result by stating the equality for every partial line digraph.

\begin{theorem}\label{klker}
 Let $D$ be a digraph with minimum in-degree at least 1. Let $A'$ and $\phi$ satisfy the requirements of the definition of a partial line digraph, i.e., $\mathcal{L}_{(A',\phi)}D=\mathcal{L} D$. Then the number of semikernels of  $D$ is  less than or equal to the number of semikernels of    $\mathcal{L} D$.
\end{theorem}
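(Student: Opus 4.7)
The plan is to mirror the argument used for $(k,l)$-kernels in Theorem \ref{klker}. Writing $\mathcal{S}$ for the set of semikernels of $D$ and $\mathcal{S}^{*}$ for those of $\mathcal{L}D$, I would define $f:\mathcal{S}\to\mathcal{S}^{*}$ by $f(S)=\omega^{-}(S)\cap A'$. Since any semikernel is in particular an independent (i.e.\ $2$-independent) set, Lemma \ref{kinde} applied with $k=2$ immediately supplies two facts for free: the image $f(S)$ is an independent set of $\mathcal{L}D$, and the assignment $S\mapsto\omega^{-}(S)\cap A'$ is injective. The only remaining task is therefore to verify that $f(S)$ satisfies the absorbing condition in the definition of a semikernel.

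To carry out that verification, fix $S\in\mathcal{S}$, set $\hat{S}=\omega^{-}(S)\cap A'$, and pick an arbitrary arc in $\omega^{+}(\hat{S})$ inside $\mathcal{L}D$. By the arc rule of a partial line digraph it has the form $(ab,\phi(b,c))$ with $ab\in\hat{S}$ and $(b,c)\in A(D)$, and $\phi(b,c)=\alpha c$ for some $\alpha\in V(D)$. The condition $ab\in\hat{S}$ forces $b\in S$, while membership in $\hat{S}$ depends only on the head of the arc, so $\phi(b,c)\notin\hat{S}$ forces $c\notin S$. Consequently $(b,c)\in\omega^{+}(S)$ in $D$, and the semikernel property of $S$ delivers some $s'\in S$ with $(c,s')\in A(D)$.

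To close the loop, consider the arc $\phi(c,s')\in\omega^{-}(s')\cap A'$. Its head is $s'\in S$, so $\phi(c,s')\in\hat{S}$; and by the arc rule applied to $(c,s')\in A(D)$, the pair $(\alpha c,\phi(c,s'))$ belongs to $A(\mathcal{L}D)$. This is precisely an absorbing arc from $\phi(b,c)$ back into $\hat{S}$, so $\hat{S}$ is a semikernel of $\mathcal{L}D$. Combined with injectivity from Lemma \ref{kinde}, this gives $|\mathcal{S}|\leq|\mathcal{S}^{*}|$.

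The delicate point to keep in mind throughout is the behaviour of $\phi$: it may redirect the tail of an arc but always fixes the head, which is exactly what turns ``lying in $\omega^{-}(S)\cap A'$'' into a condition about the head alone. Once that observation is in place, the transport of the semikernel property through $\phi$ is straightforward, and no extra hypothesis on girth or on $\delta^{-}$ (beyond the standing assumption $\delta^{-}\geq 1$, which is needed to form $\mathcal{L}D$) is required.
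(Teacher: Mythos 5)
Your proposal is correct and follows essentially the same route as the paper: the same map $S\mapsto\omega^{-}(S)\cap A'$, independence and injectivity from Lemma \ref{kinde} with $k=2$, and the same head-tracking argument through $\phi$ to transfer the absorbing condition. No substantive differences to report.
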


  \begin{proof} Denote by $\mathcal{S}$ the set of all semikernels of $D$, and by $\mathcal{S}^*$ denote the set of all semikernels of  $\mathcal{L}D$.
Let $f:\mathcal{S} \to \mathcal{S}^*$ be defined by $f(K)=\omega^-(K)\cap A'$ for all $K\in \mathcal{S}$.
 Let us see that $f(K)\in  \mathcal{S}^*$.

 By Lemma \ref{kinde}, we know $f(K)$ is an independent set. Let  $e'e\in \omega^+(f(K))$.
  Then $e'=x'y'\in f(K)= \omega^-(K)\cap A'$, yielding that
  $y'\in K$. Moreover,
  $e=\phi(y'y)$ because $e'e\in A(\mathcal{L} D)$,  which implies that  $y'y\in \omega^+(K)$ since $y'\in K$.
 Since $K$ is a semikernel, there exists  $yy''\in \omega^-(K)$, implying $e''=\phi(yy'')\in\omega^-(K)\cap A'=f(K)$, then $ee''\in \omega^-(f(K))$, implying that  $f(K)$ is a semikernel.
\end{proof}

  Figure \ref{semikernels} shows both a digraph and its line digraph with different number of semikernels.

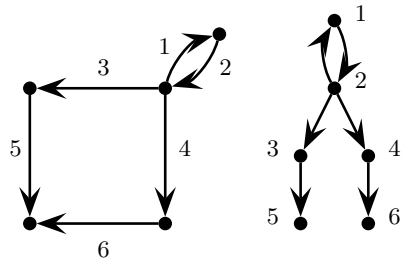
\begin{figure}[h!]
\begin{center}
\psset{unit=0.9cm, linewidth=0.035cm}
   \begin{pspicture}(-4,-0.5)(9,3.2)

      \cnode*(1,2){0.1}{1}
      \cnode*(-1,2){0.1}{2}
      \cnode*(-1,0){0.1}{3}
      \cnode*(1,0){0.1}{4}
      \cnode*(1.8,2.8){0.1}{5}

      \cnode*(3.5,3){0.1}{6} \put(3.8,3){\footnotesize $1$}
      \cnode*(3.5,2){0.1}{7} \put(3.8,2){\footnotesize $2$}
       \cnode*(3,1){0.1}{8} \put(2.5,1){\footnotesize $3$}
  \cnode*(4,1){0.1}{9} \put(4.3,1){\footnotesize $4$}
 \cnode*(3,0){0.1}{10} \put(2.5,0){\footnotesize $5$}
  \cnode*(4,0){0.1}{11} \put(4.3,0){\footnotesize $6$}

      \ncline[arrowsize=.2 3]{->}{1}{2} \put(0,2.2){\footnotesize $3$}
      \ncline[arrowsize=.2 3]{->}{1}{4} \put(1.2,1){\footnotesize $4$}
      \ncline[arrowsize=.2 3]{->}{2}{3} \put(-1.3,1){\footnotesize $5$}
      \ncline[arrowsize=.2 3]{->}{4}{3} \put(0,-.5){\footnotesize $6$}
      \ncarc[arcangle=30, arrowsize=.2 3]{->}{1}{5} \put(.9,2.5){\footnotesize $1$}
      \ncarc[arcangle=30, arrowsize=.2 3]{->}{5}{1} \put(1.8,2.2){\footnotesize $2$}

 \ncarc[arcangle=30, arrowsize=.2 3]{->}{6}{7}
      \ncarc[arcangle=30, arrowsize=.2 3]{->}{7}{6}
 \ncline[arrowsize=.2 3]{->}{7}{8}
   \ncline[arrowsize=.2 3]{->}{7}{9}
      \ncline[arrowsize=.2 3]{->}{8}{10}
      \ncline[arrowsize=.2 3]{->}{9}{11}

      \end{pspicture}
\caption{A digraph with $3$ semikernels and its line digraph, with $6$ semikernels. \label{semikernels}}
\end{center}
\end{figure}

\begin{theorem}
Let $D$ be a digraph with minimum in-degree at least 1. Let $A'$ and $\phi$ satisfy the requirements of the definition of a partial line digraph, i.e., $\mathcal{L}_{(A',\phi)}D=\mathcal{L} D$. Then $\mathcal{L} D$ has a semikernel if and only if $D$ has a semikernel.
\end{theorem}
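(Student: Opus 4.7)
The plan is to establish each implication separately. The backward direction, that a semikernel of $D$ induces one of $\mathcal{L}D$, is immediate from the preceding theorem: the map $f(K) = \omega^-(K) \cap A'$ already sends any semikernel $K$ of $D$ to a semikernel of $\mathcal{L}D$, and it preserves nonemptiness because $H(A')=V$ ensures every vertex of $K$ contributes at least one arc to $\omega^-(K)\cap A'$.

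For the forward direction, given a nonempty semikernel $\hat{S}$ of $\mathcal{L}D$, the natural candidate is $S:=H(\hat{S})$, the set of heads of arcs in $\hat{S}$; this is clearly nonempty. What remains is to verify independence in $D$ and the semikernel absorbing condition. The absorbing part is routine: given $(s,v)\in A(D)$ with $s\in S$ and $v\notin S$, pick any $xs\in \hat{S}$; writing $\phi(s,v)=\alpha v$, we must have $\alpha v\notin \hat{S}$ (otherwise $v\in H(\hat{S})=S$), so $(xs,\alpha v)\in \omega^+(\hat{S})$. The semikernel property of $\hat{S}$ then supplies $t\in \hat{S}$ with $(\alpha v,t)\in A(\mathcal{L}D)$, which by the definition of $A(\mathcal{L}D)$ forces $t=\phi(v,w)$ for some $(v,w)\in A(D)$; the head $w$ of $t$ lies in $S$, so $(v,w)$ is the required absorbing arc.

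The main obstacle is proving independence of $S$, where one must exploit the presence of \emph{two} representatives at a common head. Suppose for contradiction that $(u,v)\in A(D)$ with $u,v\in S$, and fix $xu, yv\in \hat{S}$. The arc $(xu,\phi(u,v))=(xu,\alpha v)$ belongs to $A(\mathcal{L}D)$. If $\alpha v\in \hat{S}$ we immediately contradict the independence of $\hat{S}$. Otherwise $(xu,\alpha v)\in \omega^+(\hat{S})$, and applying the semikernel condition to this outgoing arc produces $t=\phi(v,w)\in \hat{S}$ for some $(v,w)\in A(D)$. The key observation is that the arc $(yv,\phi(v,w))=(yv,t)$ now lies in $A(\mathcal{L}D)$ with both endpoints $yv,t\in \hat{S}$, again contradicting independence. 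The delicate point is that the out-arc $(v,w)\in A(D)$ needed to close the loop is not assumed a priori but is delivered for free by the semikernel property applied to $\alpha v$; this is precisely what makes the argument work without any girth or degree hypothesis beyond $\delta^-(D)\ge 1$.
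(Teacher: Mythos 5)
Your proof is correct and follows essentially the same route as the paper: the forward map $K\mapsto \omega^-(K)\cap A'$ from the preceding theorem, and the heads map $\hat S\mapsto H(\hat S)$ for the converse, with the same use of the semikernel condition to manufacture the out-arc $(v,w)$ needed in both the independence and absorption arguments. If anything, you are slightly more careful than the paper, which omits the (easy) subcase $\phi(u,v)\in\hat S$ in the independence argument.
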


\begin{proof} From Theorem \ref{klker} it follows that if $D$ has a semikernel, then $\mathcal{L} D$ has a semikernel. To see the converse let us consider the function
$h: \mathcal{S}^* \to \mathcal{S}$ defined by $h(K^*)=H(K^*)$. First, let us see that $H(K^*)$ is an independent set of $D$. Let $u,v\in H(K^*)$. Then $u'u,v'v\in K^*$ for some $u',v'\in V(D)$, yielding that $u'u,v'v$ are independent in $\mathcal{L} D$. We reason by contradiction assuming that $uv\in A(D)$. Then $(u'u,\phi(uv))\in \omega^+(K^*)$.
Since $K^*$ is a semikernel
 there is  $(\phi(uv),\phi(vw))\in \omega^-(K^*)$. But $v'v,\phi(vw)\in K^*$ and they are adjacent which is a contradiction. Therefore $u,v$ are not adjacent and so $H(K^*)$ is independent.

 Second, $vu\in \omega^+(H(K^*))$.  As $v\in H(K^*)$,  there is $v'v\in K^*$ and $(v'v,\phi(vu))\in  \omega^+(K^*)$. Since $K^*$ is a semikernel it follows that there exists  $(\phi(vu),\phi(uw))\in \omega^-(K^*)$. Then $\phi(uw)\in K^*$ and so $w\in H(K^*)$.
 Therefore $uw\in \omega^-(H(K^*))$ and the proof is finished.
\end{proof}

\section{Grundy function}

\begin{definition} \label{grun}
Consider a simple digraph  $D=(V,A)$. Following Berge \cite{B:85}, a non-negative integer function $g$ on $V$ is defined as a \emph{Grundy function}     if the following two requirements hold:
\begin{itemize}
\item[(1)] $g(x)=k>0$ implies that for each $j<k$, there is  $y\in N^+(x)$ with $g(y)=j$;
\item[(2)] $g(x)=k$ implies that each $y\in N^+(x)$ satisfies $g(y)\ne k$.
\end{itemize}
\end{definition}
This concept was first defined by P.M. Grundy in 1939 for acyclic digraphs as follows:

For every $x\in V$,  $g(x)=\min (\mathbb{N}\setminus \{g(y): y\in N^+(x)\})$.

\noindent Furthermore, Grundy proved that  every acyclic digraph has a unique Grundy function. However, there are digraphs without Grundy function, for instance the odd directed cycles. One of the most relevant properties of a Grundy function is that if $D$ has a Grundy function $g$, then $D$ has a kernel $K=\{ x\in V : g(x)=0\}$.

Next, we propose a generalization of a Grundy function called $(k,l)$-Grundy function. To do that we need to introduce some notation. The  out-neighborhood at distance $r$ from a vertex $x\in V$ is $N^+_r(x)=\{ y\in V: 1\le d(x,y)\le r\}$.
\begin{definition} \label{grunkl}
Consider a simple digraph  $D=(V,A)$ and let $l\ge 1$ and $k\ge 2$ be two integers. A non-negative integer function $g$ on $V$ is defined as a \emph{$(k,l)$-Grundy function}     if the following two requirements hold:
\begin{itemize}
\item[(1)] $g(x)=t>0$ implies that for each $j<t$, there is  $y\in N^+_l(x)$ with $g(y)=j$;
\item[(2)] $g(x)=t$ implies that each $y\in N^+_{k-1}(x)$ satisfies $g(y)\ne t$.
\end{itemize}
\end{definition}

Figure \ref{egrun} depicts on the left side a digraph with a $(2,2)$-Grundy function and on the right side a digraph with a $(3,2)$-Grundy function.
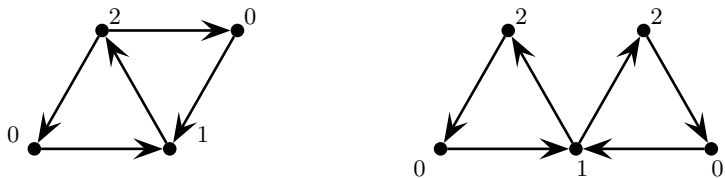
\begin{figure}[h!]
\begin{center}
\psset{unit=0.9cm, linewidth=0.035cm}
   \begin{pspicture}(-1,-1)(12,3.)

      \cnode*(1,1.75){0.1}{2} \put(1.1,1.85){\footnotesize \it $2$}
      \cnode*(2,0){0.1}{4} \put(2.4,0.1){\footnotesize \it $1$}
      \cnode*(0,0.){0.1}{5} \put(-.4,0.1){\footnotesize \it $0$}
      \cnode*(3,1.75){0.1}{3} \put(3.1,1.85){\footnotesize \it $0$}

      \ncline[arrowsize=.2 3]{->}{2}{3}
      \ncline[arrowsize=.2 3]{->}{3}{4}
      \ncline[arrowsize=.2 3]{->}{2}{5}
      \ncline[arrowsize=.2 3]{->}{5}{4}
      \ncline[arrowsize=.2 3]{->}{4}{2}

 \cnode*(7,1.75){0.1}{l2} \put(7.1,1.85){\footnotesize \it $2$}
      \cnode*(8,0.){0.1}{l4} \put(8.,-0.4){\footnotesize \it $1$}
      \cnode*(6,0.){0.1}{l5} \put(5.6,-0.4){\footnotesize \it $0$}
       \cnode*(10,0.){0.1}{l1} \put(10.,-0.4){\footnotesize \it $0$}
  \cnode*(9,1.75){0.1}{l3} \put(9.1,1.85){\footnotesize \it $2$}

      \ncline[arrowsize=.2 3]{->}{l2}{l5}
      \ncline[arrowsize=.2 3]{->}{l5}{l4}
      \ncline[arrowsize=.2 3]{->}{l4}{l2}
       \ncline[arrowsize=.2 3]{->}{l1}{l4}
   \ncline[arrowsize=.2 3]{->}{l3}{l1}
       \ncline[arrowsize=.2 3]{->}{l4}{l3}
      \end{pspicture}
\caption{A digraph with a $(2,2)$-Grundy function and a digraph with $(3,2)$-Grundy function. \label{egrun}}
\end{center}
\end{figure}

\begin{remark}If a digraph $D$ has a $(k,l)$-Grundy function $g$, then $D$ has a $(k,l)$-kernel $K=\{ x\in V : g(x)=0\}$.
\end{remark}

In \cite{GPR:91} it was proved that  the number of Grundy functions of $D$ is equal to number of Grundy functions of  its line digraph. Next, we extend this result to $(k,l)$-Grundy functions and to partial line digraphs. First we prove that a digraph has a $(k,l)$-Grundy function if and only if any partial line digraph has.

\begin{lemma}\label{DLkl}Let $l\ge 1$ and $k\ge 2$ be two integers. Let $D$ be a digraph with minimum in-degree at least 1 having a $(k,l)$-Grundy function $g$.   Let $A'$ and $\phi$ satisfy the requirements of the definition of a partial line digraph, i.e., $\mathcal{L}_{(A',\phi)}D=\mathcal{L} D$.
Then $g_L:A' \to \mathbb{N}$  defined as $g_L(yx)=g(x)$ is a $(k,l)$-Grundy function on $\mathcal{L} D$.
\end{lemma}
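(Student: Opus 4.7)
The strategy is to verify directly the two defining conditions of a $(k,l)$-Grundy function for $g_L$, by leveraging a correspondence between directed paths in $\mathcal{L}D$ and walks in $D$. Concretely, every arc of $\mathcal{L}D$ has the form $(ij,\phi(j,k))$ with $(j,k)\in A(D)$, so a directed path $yx = e_0,e_1,\ldots,e_s$ in $\mathcal{L}D$ produces a walk $x = z_0,z_1,\ldots,z_s$ in $D$ by taking $z_i$ to be the head of $e_i$; conversely, a directed path $x = x_0,x_1,\ldots,x_s$ in $D$ lifts via $\phi$ to the directed path $yx,\phi(x,x_1),\phi(x_1,x_2),\ldots,\phi(x_{s-1},x_s)$ of the same length in $\mathcal{L}D$, whose terminal vertex belongs to $A'$ and has head $x_s$ by property (ii) of $\phi$. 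This is the same projection/lifting technique already used in the proof of Lemma \ref{kinde}.

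To check condition (1), I would assume $g_L(yx) = g(x) = t > 0$ and fix $j < t$. By condition (1) of $g$ on $D$, there exists $z \in N^+_l(x)$ with $g(z)=j$, i.e., a directed path from $x$ to $z$ in $D$ of length at most $l$. Lifting this path yields a directed path of the same length in $\mathcal{L}D$ from $yx$ to some $\phi(x_{s-1},z)\in A'$, which has head $z$ and therefore $g_L$-value equal to $g(z)=j$. This places the required vertex inside $N^+_l(yx)$.

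To check condition (2), suppose $g_L(yx) = g(x) = t$ and $e = uv \in N^+_{k-1}(yx)$. A shortest path in $\mathcal{L}D$ from $yx$ to $e$ has length $s \leq k-1$, and projecting it to the walk of heads yields a walk in $D$ from $x$ to $v$ of length $s$; hence $d_D(x,v)\le k-1$, so $v \in N^+_{k-1}(x)$. Condition (2) of $g$ on $D$ then gives $g(v) \neq t$, i.e., $g_L(e) = g(v) \neq t$. The delicate step I expect to require the most care is this head-projection and the identification of $v$ as a legitimate element of $N^+_{k-1}(x)$; once the walk-of-heads in $D$ is correctly extracted from each path in $\mathcal{L}D$, both Grundy conditions transfer directly from $D$ to $\mathcal{L}D$.
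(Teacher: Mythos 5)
Your proposal is correct and follows essentially the same path--lifting and head--projection argument as the paper's proof: condition (1) is verified by lifting a short path from $x$ to $z$ in $D$ to a path of the same length from $yx$ to $\phi(x_{s-1},z)$ in $\mathcal{L}D$, and condition (2) by projecting a short path in $\mathcal{L}D$ onto the walk of heads in $D$. Your explicit remark that the projection yields a walk (hence only an upper bound on $d_D(x,v)$, which is all that is needed for $v\in N^+_{k-1}(x)$) is slightly more careful than the paper's phrasing, but the argument is the same.
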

\begin{proof}
 Let $g:V\to \mathbb{N}$ be a $(k,l)$-Grundy function on $D=(V,A)$. Next, we prove that  $g_L:A' \to \mathbb{N}$ be defined as $g_L(yx)=g(x)$ is a $(k,l)$-Grundy function on $\mathcal{L} D$.
Let $yx\in V(\mathcal{L} D)$. First, suppose that $g_L(yx)=t>0$. Since $g_L(yx)=g(x)=t>0$, by (1) of Definition \ref{grunkl}, it follows that for each $j<t$, there is  $w\in N_l^+(x)$ with $g(w)=j$. Hence, there is a path  $(x,x_1,\ldots, x_r=w)$ in $D$ with $r\le l$,  which produces a path  $(yx,\phi(xx_1),\ldots, \phi(x_{r-1}x_r))$ in $ \mathcal{L} D$ of length $r$, yielding that
$\phi(x_{r-1}w)\in N^+_l(yx)\subset V(\mathcal{L}D)$.
Therefore  for each $j<t$, there is  $\phi(x_{r-1}w)\in N^+_{l}(yx)\subset V(\mathcal{L}D)$ and   $g_L(\phi(x_{r-1}w))=g(w)=j$.  Thus, $g_L$ meets  requirement (1) of Definition \ref{grunkl}. Now suppose that $g_L(yx)=t$, so $g(x)=t$. Let  $uv \in N^+_{k-1}(yx)\subset V(\mathcal{L}D)$, then there is a
path  $(yx,\phi(xx_1),\ldots, \phi(x_{r-1}x_r)=uv)$  of length $r\le k-1$ in $ \mathcal{L} D$. Hence there is a path  $(x,x_1,\ldots, x_r=v)$ in $D$ with $r\le k-1$, yielding that $g(v)\ne t$ because $v\in N_{k-1}^+(x)$
applying (2) of Definition \ref{grunkl}.
Therefore for all $uv\in N^+_{k-1}(yx)$, we have $g_L(uv)=g_L(\phi(x_{r-1}x_r))=g(v)\ne t$. Thus, $g_L$ meets requirement (2) of Definition \ref{grunkl}, concluding that $g_L$ is a $(k,l)$-Grundy function on $\mathcal{L} D$.
\end{proof}

\begin{lemma}\label{LDkl}Let $l\ge 1$ and $k\ge 2$ be two integers such that $l\le k-1$. Let $D$ be a digraph with minimum in-degree at least 1. Let $A'$ and $\phi$ satisfy the requirements of the definition of a partial line digraph, i.e., $\mathcal{L}_{(A',\phi)}D=\mathcal{L} D$. Suppose that $g$ is a $(k,l)$-Grundy function on $\mathcal{L} D$.
 Then  $g_D:V \to \mathbb{N}$  defined as $g_D(x)=g(yx)$, $yx\in A'$, is a $(k,l)$-Grundy function on $D$.
\end{lemma}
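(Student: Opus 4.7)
The plan is to tackle three things: well-definedness of $g_D$, and verification of properties (1) and (2) of Definition \ref{grunkl}. The main obstacle is the well-definedness step, which is exactly where the hypothesis $l\le k-1$ enters; once that is in hand, the two properties follow by a routine translation of paths in $\mathcal{L}D$ into walks in $D$ that mirrors Lemma \ref{DLkl}. The key structural fact used throughout is that any two vertices $yx, y'x\in A'$ sharing the same head $x$ have identical out-arc sets in $\mathcal{L}D$, namely $\{\phi(xz):(x,z)\in A\}$; inducting on $r$, this yields $N_r^+(yx)=N_r^+(y'x)$ for every $r\ge 1$.

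For well-definedness, I would argue by contradiction. Suppose $g(yx)<g(y'x)$ and set $t=g(yx)$. Since $g(y'x)>t\ge 0$, condition (1) of Definition \ref{grunkl} applied to $g(y'x)$ yields some $w\in N_l^+(y'x)=N_l^+(yx)$ with $g(w)=t$; the hypothesis $l\le k-1$ places $w\in N_{k-1}^+(yx)$, contradicting condition (2) applied to $g(yx)$. By symmetry $g(yx)=g(y'x)$, so $g_D(x):=g(yx)$ does not depend on the choice of $yx\in A'\cap \omega^-(x)$ (which is nonempty by item (i) of the definition of $\mathcal{L}D$).

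To verify property (1), assume $g_D(x)=t>0$ and choose any $yx\in A'$, so $g(yx)=t$. For each $j<t$, condition (1) for $g$ produces some $u\in N_l^+(yx)$ with $g(u)=j$, witnessed by a path $yx,\phi(xx_1),\ldots,\phi(x_{r-1}x_r)=u$ of length $r\le l$ in $\mathcal{L}D$. This induces the walk $x,x_1,\ldots,x_r$ of the same length in $D$; the head of $u$ is $x_r$, so the well-definedness step gives $g_D(x_r)=g(u)=j$. The case $x_r=x$ is excluded because it would force $g_D(x)=j\ne t$; hence $x_r\in N_l^+(x)$ is the desired witness.

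To verify property (2), suppose $g_D(x)=t$ and let $w\in N_{k-1}^+(x)$, witnessed by a shortest $x\to w$ path $x,x_1,\ldots,x_s=w$ of length $s\le k-1$ in $D$. For any $yx\in A'$, lifting the path gives $yx,\phi(xx_1),\ldots,\phi(x_{s-1}w)=:u$ in $\mathcal{L}D$, so $u\in N_{k-1}^+(yx)$. Condition (2) for $g$ gives $g(u)\ne t$; since the head of $u$ is $w$, the well-definedness step yields $g_D(w)=g(u)\ne t$, as required.
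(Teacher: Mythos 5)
Your proposal is correct and follows essentially the same route as the paper: well-definedness is obtained by applying condition (1) to the larger of $g(yx),g(y'x)$, transferring the witness via the shared out-neighbourhoods of $yx$ and $y'x$, and invoking $l\le k-1$ together with condition (2); properties (1) and (2) are then checked by the same path correspondence between $\mathcal{L}D$ and $D$. Your explicit exclusion of the case $x_r=x$ in the verification of property (1) is a small extra precaution that the paper leaves implicit, but it does not change the argument.
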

\begin{proof}
 Let $g:A'\to \mathbb{N}$ be a $(k,l)$-Grundy function on  $\mathcal{L} D$.  First, let us prove that $g_D:V \to \mathbb{N}$ defined as $g_D(x)=g(yx)$ with $x\in V$ and $yx\in A'$ is a function. So assume that there are two  arcs $yx,y'x\in A'$,  such that $g(yx)\ne g(y'x)$. Suppose $0\le h=g(yx)< g(y'x)$,
  then there exists $uv\in N^+_l(y'x)\subset V(\mathcal{L} D)$ such that
  $g(uv)=h$ by condition (1) of Definition \ref{grunkl}. Then there is a
path  $(y'x,\phi(xx_1),\ldots, \phi(x_{r-1}x_r)=uv)$  of length $r\le l$ in $ \mathcal{L} D$, and also a path  $(yx,\phi(xx_1),\ldots, \phi(x_{r-1}x_r)=uv)$  of length $r\le l$ in $ \mathcal{L} D$ implying that $uv\in N^+_l(yx)\subseteq N^+_{k-1}(yx) $  because $l\le k-1$, and $g(yx)=g(uv)=h$
   which is a contradiction with (2) of Definition \ref{grunkl}. Therefore $g(yx)=g(y'x)$. Furthermore, for every $x\in V$, there is an arc $yx\in A'$ by definition of $\mathcal{L} D$. Hence, $g_D(x)$ exists for all $x\in V$. Thus $g_D$ is a function.

Next, we prove that $g_D$ is a $(k,l)$-Grundy function on $ D$.
Let $x\in V$. First, suppose that $g_D(x)=t>0$. Since $g_D(x)=g(wx)=t>0$ where $wx\in A'$, by (1) of Definition \ref{grunkl}, it follows that for each $j<t$, there is  $uv\in N_{l}^+(wx)\subset V(\mathcal{L}D)$ with $g(uv)=j$. Then there is  a
path  $(wx,\phi(xx_1),\ldots, \phi(x_{r-1}x_r)=uv)$  of length $r$ in $ \mathcal{L} D$, implying that $v\in N^+_l(x)\subset V(D)$ and
$g_D(v)=j$. Hence, $g_D$ satisfies (1) of Definition \ref{grunkl}.
Finally, suppose that $g_D(x)=t$, let us see that for all $y\in N^+_{k-1}(x)$, $g_D(y)\not=t$.
 We have $t=g_D(x)=g(wx)$ for  $wx\in A'$.
 Since for all $y\in N^+_{k-1}(x)$, there exists a path $(x,x_1,\ldots, x_r=y)$ of length $r\le k-1$ in $D$, it follows that
  $\phi(x_{r-1}y)\in N^+_l(wx)\subset V(\mathcal{L} D)$, yielding that $g(\phi(x_{r-1}y))\ne g(wx)=t$ because (2) of Definition \ref{grunkl}. As $g_D(y)=g(\phi(x_{r-1}y))$ it turns out that $g_D(y)\ne t$.
   Thus, $g_D$  meets requirement (2) of Definition \ref{grunkl}, and we conclude that $g_D$ is a $(k,l)$-Grundy function.
\end{proof}

As an immediate consequence of both Lemma \ref{DLkl} and Lemma \ref{LDkl}, we can write the following theorem.
\begin{theorem}
Let $l\ge 1$ and $k\ge 2$ be two integers with $l\le k-1$. A digraph $D$ with minimum in-degree at least 1 has a $(k,l)$-Grundy function if and only if  any partial line digraph
$\mathcal{L} D$ has a  $(k,l)$-Grundy function.
\end{theorem}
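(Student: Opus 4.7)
The plan is to obtain the theorem as a direct corollary of the two preceding lemmas, which together yield both implications. In fact, no new construction is required: Lemma \ref{DLkl} provides a recipe for lifting a $(k,l)$-Grundy function from $D$ to $\mathcal{L}D$, and Lemma \ref{LDkl} provides a recipe for pushing a $(k,l)$-Grundy function on $\mathcal{L}D$ back down to $D$.

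For the forward direction, assume $D$ admits a $(k,l)$-Grundy function $g$. I would simply invoke Lemma \ref{DLkl} to conclude that the assignment $g_L(yx)=g(x)$, defined on the vertex set $A'$ of $\mathcal{L}D$, is a $(k,l)$-Grundy function on $\mathcal{L}D$. Note that Lemma \ref{DLkl} is valid for all $1\le l$ and $k\ge 2$, so the hypothesis $l\le k-1$ is not consumed here.

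For the reverse direction, assume $\mathcal{L}D$ has a $(k,l)$-Grundy function $g$. By Lemma \ref{LDkl}, setting $g_D(x)=g(yx)$ for any $yx\in A'$ yields a $(k,l)$-Grundy function on $D$. The key subtlety here, which is exactly where the hypothesis $l\le k-1$ enters, is that $g_D$ must be well-defined: two distinct arcs $yx,y'x\in A'$ sharing the head $x$ lie in each other's $l$-out-neighborhood in $\mathcal{L}D$ via parallel extensions, so condition (2) of Definition \ref{grunkl} (applied with radius $k-1\ge l$) forces $g(yx)=g(y'x)$. Existence of at least one such arc $yx\in A'$ is guaranteed by $\delta^-(D)\ge 1$ together with item (i) of the definition of $\mathcal{L}D$.

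The only potential obstacle is precisely this well-definedness step, but it has already been handled inside Lemma \ref{LDkl}; once both lemmas are in hand the theorem follows with no further argument. I would therefore write the proof as a single line citing Lemma \ref{DLkl} for the "only if" direction and Lemma \ref{LDkl} for the "if" direction.
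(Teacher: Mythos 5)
Your proposal is correct and matches the paper exactly: the paper also presents this theorem as an immediate consequence of Lemma \ref{DLkl} (for the forward direction) and Lemma \ref{LDkl} (for the converse), with the hypothesis $l\le k-1$ consumed only inside Lemma \ref{LDkl} to make $g_D$ well defined. Your remarks on where each hypothesis is used are accurate and add nothing inconsistent with the paper's treatment.
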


\begin{theorem}\label{number}
Let $l\ge 1$ and $k\ge 2$ be two integers with $l\le k-1$. Let $D$ be a digraph  with minimum in-degree at least 1. Then the number of $(k,l)$-Grundy functions of $D$ is equal to number of $(k,l)$Grundy functions of  any partial line digraph $\mathcal{L} D$.
\end{theorem}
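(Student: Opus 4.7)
My plan is to establish the equality by exhibiting an explicit bijection between the set $\mathcal{G}$ of $(k,l)$-Grundy functions on $D$ and the set $\mathcal{G}^*$ of $(k,l)$-Grundy functions on $\mathcal{L}D$, using the two maps already constructed in Lemma \ref{DLkl} and Lemma \ref{LDkl}.

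Concretely, I define $\Phi:\mathcal{G}\to\mathcal{G}^*$ by $\Phi(g)=g_L$, where $g_L(yx)=g(x)$ for every $yx\in A'$, and $\Psi:\mathcal{G}^*\to\mathcal{G}$ by $\Psi(g)=g_D$, where $g_D(x)=g(yx)$ for some (and, as Lemma \ref{LDkl} shows, for every) arc $yx\in A'$ with head $x$. Lemmas \ref{DLkl} and \ref{LDkl} guarantee that these two assignments really take $(k,l)$-Grundy functions to $(k,l)$-Grundy functions, so both $\Phi$ and $\Psi$ are well-defined maps between the claimed sets; the hypothesis $l\le k-1$ enters precisely here, since it is what is used in Lemma \ref{LDkl} to show that $g_D$ does not depend on the choice of arc $yx\in A'$ with head $x$.

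It remains to verify that $\Psi\circ\Phi=\mathrm{id}_{\mathcal{G}}$ and $\Phi\circ\Psi=\mathrm{id}_{\mathcal{G}^*}$. For the first composition, given $g\in\mathcal{G}$ and $x\in V(D)$, pick any arc $yx\in A'$ (which exists because $H(A')=V$); then $(\Psi\circ\Phi)(g)(x)=(g_L)_D(x)=g_L(yx)=g(x)$, so $\Psi\circ\Phi$ is the identity on $\mathcal{G}$. For the second, given $g\in\mathcal{G}^*$ and $yx\in A'$, we have $(\Phi\circ\Psi)(g)(yx)=(g_D)_L(yx)=g_D(x)$; the value $g_D(x)$ is, by construction, $g(wx)$ for some $wx\in A'$, but by the well-definedness established in Lemma \ref{LDkl} this common value equals $g(yx)$, so $(\Phi\circ\Psi)(g)=g$.

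The only subtle point, and the one I expect to be the main obstacle in a cleanly written-up proof, is the argument in the second composition: one must be careful that the $w$ used in the definition of $g_D(x)=g(wx)$ need not be the same as the $y$ in the arc $yx$ at which we evaluate $(g_D)_L$. The whole force of the theorem, and the reason for the hypothesis $l\le k-1$, is that this ambiguity is harmless, and everything reduces to the well-definedness clause already proved. Once this point is made explicit, the bijectivity of $\Phi$ is immediate and the equality $|\mathcal{G}|=|\mathcal{G}^*|$ follows.
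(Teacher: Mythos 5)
Your proposal is correct and follows essentially the same route as the paper: both use the maps $g\mapsto g_L$ and $h\mapsto h_D$ supplied by Lemmas \ref{DLkl} and \ref{LDkl}, resting on $H(A')=V$ and on the well-definedness of $g_D$ (where $l\le k-1$ is used). The only cosmetic difference is that you verify the two maps are mutually inverse, whereas the paper shows each of them is injective and concludes $|\mathcal{F}|\le|\mathcal{F}^*|$ and $|\mathcal{F}^*|\le|\mathcal{F}|$; the underlying computations are the same.
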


\begin{proof}
 Let $A'$ and $\phi$ satisfy the requirements of the definition of a partial line digraph, i.e., $\mathcal{L}_{(A',\phi)}D=\mathcal{L} D$.
Denote by $\mathcal{F}$ the set of all $(k,l)$-Grundy functions on $D$, and by $\mathcal{F}^*$ the set of all $(k,l)$-Grundy functions on $\mathcal{L}D$.
 If $g\in \mathcal{F}$, then the function $g_L$ given by  Lemma \ref{DLkl}, belongs to $ \mathcal{F}^*$; and  if $h\in \mathcal{F}^*$, then the function $h_D$ given by Lemma \ref{LDkl}, belongs to $ \mathcal{F}$.

Let $f:\mathcal{F} \to \mathcal{F}^*$ be defined by $f(g)=g_L$. Let us prove that $f$ is an injective function.

 Let $g,g'\in \mathcal{F}$ be such that  $f(g)=f( g')$, that is $g_L=g'_L$. Let us show that $g=g'$. Since for all $x\in V$ there exists $yx\in A'$, and $g_L(yx)=g'_L(yx)$, it follows that $g_L(yx)=g(x)=g'(x)=g'_L(yx)$. Hence $g=g'$.
 Thus, $f$ is an injective function yielding that $|\mathcal{F}|\le |\mathcal{F}^*|$.

Let $f^*:\mathcal{F}^*\to \mathcal{F}$ be defined by $f^*(h)=h_D$. Let us prove that $f^*$ is an injective function.

Let $h,h'\in \mathcal{F^*}$ be such that  $f^*(h)=f^*( h')$, that is $h_D=h'_D$. Let us show that $h=h'$. Since for all $yx\in A'$ we have  $h(yx)=h_D(x)=h'_D(x)=h'(yx)$,  it follows that $h=h'$.
 Thus, $f^*$ is an injective function yielding that $|\mathcal{F^*}|\le |\mathcal{F}|$.

Hence we conclude that  $|\mathcal{F}|= |\mathcal{F}^*|$.
\end{proof}

\end{document}